\newtheorem{theorem}{Theorem}[section]
\newtheorem{corollary}{Corollary}[section]
\newtheorem{lemma}{Lemma}[section]
\newtheorem{remark}{Remark}[section]
\newtheorem{example}{Example}[section]
\newtheorem*{thmErdos}{Theorem A}
\newtheorem*{thmMarkus}{Theorem B}
\newcommand{\Ltwo}{\ensuremath{L^2(a, b)}}
\newcommand{\Clo}{\ensuremath{{\mathcal{H}_\Lambda}} }
\begin{document}
\title{Hereditary completeness of Exponential systems $\{e^{\lambda_n t}\}_{n=1}^{\infty}$ in their closed span in $L^2 (a, b)$
and Spectral Synthesis}
\author{Elias Zikkos\footnote{Department of Mathematics, Khalifa University, Abu Dhabi, United Arab Emirates,\\ 
email address:  elias.zikkos@ku.ac.ae and eliaszikkos@yahoo.com} and
Gajath Gunatillake\footnote{Department of Mathematics and Statistics, American University of Sharjah, Sharjah, United Arab Emirates,
\\email address: mgunatillake@aus.edu}
}

\maketitle

\begin{abstract}
Suppose that $\{\lambda_n\}_{n=1}^{\infty}$ is a sequence of distinct positive real numbers
satisfying the conditions inf$\{\lambda_{n+1}-\lambda_n \}>0,$ and
$\sum_{n=1}^{\infty}\lambda_n^{-1}<\infty.$
We prove that the exponential system $\{e^{\lambda_n t}\}_{n=1}^{\infty}$ is hereditarily complete in
the closure of the subspace spanned by $\{e^{\lambda_n t}\}_{n=1}^{\infty}$ in the space $L^2 (a,b)$.
We also give an example of a class of compact non-normal operators defined on this closure which admit spectral synthesis.
\end{abstract}

Mathematics Subject Classification: 30B60, 30B50, 47A10, 46C05.

Keywords: M\"{u}ntz-Sz\'{a}sz theorem, Exponential Systems, Closed Span, Hereditary completeness, Spectral Synthesis, Compact Operators.

\section{Introduction}
\setcounter{equation}{0}

Let $(a,b)$ be a bounded interval on the real line and let $L^2 (a,b)$
be the space of square integrable complex valued functions on $(a,b)$,
equipped with the norm
\[
||f||_{L^2 (a,b)}:=\left(\int_{a}^{b}|f(x)|^2\, dx\right)^{\frac{1}{2}},
\]
and endowed with the inner product
\[
\langle f,g \rangle:=\int_{a}^{b} f(x)\overline{g(x)}\, dx.
\]
Let $\Lambda$ denote the sequence of distinct positive real numbers $\{\lambda_n\}_{n=1}^{\infty}.$ Assume that
\begin{equation}\label{LKcondition}
\sum_{n=1}^{\infty}\frac{1}{\lambda_n}<\infty,
\qquad\text{and}\qquad  \lambda_{n+1}-\lambda_n\ge c,
\end{equation}
for some positive constant $c$ and all $n.$
Consider the associated exponential system $\{e^{\lambda_n t}\}_{n\in\mathbb{N}}$ of $\Lambda$.
We will denote this system by $E_{\Lambda}$.
Let $\Clo (a,b)$ denote the closed span of $E_{\Lambda}$ in $\Ltwo.$

Since the series $\sum \lambda_n^{-1}$ converges, it follows from the M\"{u}ntz-Sz\'{a}sz theorem that
$E_{\Lambda}$ is $not\,\, complete$ in $\Ltwo$, that is, $\Clo (a,b)$ is a  proper subspace of $\Ltwo$
(see \cite[Theorem 6.1]{LK}, \cite[Theorem 4.2.6]{BE}, and \cite{GurariyLusky}).
Moreover, $E_{\Lambda}$ is a $minimal$ system, that is, each function $e^{\lambda_n t}$
does not belong to the closed span of the remaining vectors of $E_{\Lambda}$ in $\Ltwo$
(see \cite[relation (1.9)]{LK}).

It is known that a family $\{f_n\}$ of functions in a separable Hilbert space $\cal{H}$ is minimal, if and only if
it has a $biorthogonal$ sequence in $\cal{H}$ (see \cite[Lemma 3.3.1]{Christensen}): in other words,
there exists a sequence $\{g_n\}_{n\in \mathbb{N}}\subset\cal{H}$ so that
\[
\langle g_n, f_m\rangle =\begin{cases} 1, & m=n, \\  0, & m\not=n.\end{cases}
\]
We note that a system in $\cal H$ which is both complete and minimal is called $exact$. An exact system has a unique biorthogonal family
in $\cal H$, however this family is not necessarily complete in $\cal H$ (see \cite{YoungP}).

Clearly now the system $E_{\Lambda}$ has a unique biorthogonal family
$\{g_n\}_{n\in \mathbb{N}}$ in the space $\Clo (a,b)$, assuming relation $(\ref{LKcondition})$.
We will show that this family is complete in $\Clo (a,b)$, thus
the system $E_{\Lambda}$ is a $Markushevich\,\, basis$ in $\Clo (a,b)$. In fact, we will prove that
$E_{\Lambda}$ is a $strong\,\, Markushevich\,\, basis$ in $\Clo (a,b)$ (Theorem $\ref{hereditary}$).
This means that
for any disjoint union of two sets $N_1$ and $N_2$, such that
$\mathbb{N}=N_1\cup N_2$, the closed span of the mixed system
\[
\{e^{\lambda_n t}:\,\, n\in N_1\}\cup\{g_{n}:\,\, n\in N_2\}
\]
in $\Ltwo$ is equal to $\Clo (a,b)$. We point out that the phrase ``strong Markushevich basis''
is used interchangeably with the phrase $hereditarily\,\, complete\,\, system$.

\begin{theorem}\label{hereditary}
Let the sequence $\Lambda=\{\lambda_n\}_{n=1}^{\infty}$ satisfy the condition $(\ref{LKcondition})$.
Then the exponential system $E_{\Lambda}$ is $\bf{hereditarily\,\, complete}$ in the space $\Clo (a,b)$.
\end{theorem}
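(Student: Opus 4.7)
The strategy is to transfer the question to the Laplace-transform side and exploit the factorization of the canonical product of $\Lambda$. For $f\in L^2(a,b)$ set $F(z):=\int_a^b f(t)e^{zt}\,dt$, which by the Paley--Wiener theorem is entire of exponential type and, on vertical lines, lies in $L^2(\mathbb{R})$. Since $\lambda_n\in\mathbb{R}$, one has $\langle f,e^{\lambda_n t}\rangle=F(\lambda_n)$, so orthogonality of $f$ to $e^{\lambda_n t}$ becomes the interpolation condition $F(\lambda_n)=0$. The second analytic object I would use is the Hadamard canonical product
$$L(z):=\prod_{n=1}^{\infty}\left(1-\frac{z}{\lambda_n}\right),$$
which converges under $\sum 1/\lambda_n<\infty$ and is an entire function of order at most one whose zero set is exactly $\Lambda$.

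The next step is to give a workable description of the biorthogonal $g_n$. Setting $G_n(z):=\int_a^b g_n(t)e^{zt}\,dt$, biorthogonality reads $G_n(\lambda_m)=\delta_{n,m}$; combined with the Paley--Wiener class corresponding to $\mathcal{H}_\Lambda(a,b)$, this identifies $G_n$, up to a suitable entire correction factor, with the Lagrange-type interpolant $L(z)/[(z-\lambda_n)L'(\lambda_n)]$. This bridges orthogonality to $g_n$ with the analytic data of $F$ at $\lambda_n$.

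Now fix a partition $\mathbb{N}=N_1\sqcup N_2$ and let $f\in\mathcal{H}_\Lambda(a,b)$ be orthogonal to the mixed system. The first set of conditions gives $F(\lambda_n)=0$ for every $n\in N_1$. Using Parseval on the real line together with the explicit form of $G_n$, the conditions $\langle f,g_n\rangle=0$ for $n\in N_2$ translate into residue/interpolation identities for the meromorphic function $F(z)/L(z)$ at the points $\{\lambda_n:n\in N_2\}$. Splitting $L=L_{N_1}L_{N_2}$ with $L_{N_i}(z):=\prod_{n\in N_i}(1-z/\lambda_n)$, the first family of vanishing conditions makes
$$\Phi(z):=\frac{F(z)}{L_{N_1}(z)}=\frac{F(z)L_{N_2}(z)}{L(z)}$$
entire.

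The final step, which I expect to be the main technical obstacle, is to show that $\Phi$ still lies in (or is orthogonal to) the relevant Paley--Wiener class and that the $N_2$-side conditions then force $\Phi\equiv 0$. Here the uniform separation $\lambda_{n+1}-\lambda_n\ge c$ enters critically: it yields lower bounds for $|L_{N_1}|$ on the real line away from its zeros, so that division by $L_{N_1}$ does not destroy the $L^2$-growth of $F$, and permits a Phragm\'en--Lindel\"of-type argument together with the residue identities above to conclude. Once $\Phi\equiv 0$, we obtain $F\equiv 0$ and hence $f=0$, which is exactly the assertion of hereditary completeness.
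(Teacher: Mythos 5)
Your strategy is genuinely different from the paper's, but as written it has a real gap at exactly the point you flag as ``the main technical obstacle,'' and that gap cannot be closed by the general machinery you invoke. Two concrete problems. First, the identification of the biorthogonal transform $G_n$ with $L(z)/[(z-\lambda_n)L'(\lambda_n)]$ ``up to a suitable entire correction factor'' is not a harmless normalization: since $\sum 1/\lambda_n<\infty$, the canonical product $L$ has zero exponential type, so the naive Lagrange quotient is not the Laplace transform of any $L^2(a,b)$ function, and the unspecified correction factor carries all of the analytic content. Without pinning it down, the translation of $\langle f,g_n\rangle=0$ into residue identities for $F/L$ is not established. Second, and more seriously, the concluding step --- that $\Phi=F/L_{N_1}$ stays in the relevant class and that the $N_2$ conditions force $\Phi\equiv 0$ via separation bounds and Phragm\'en--Lindel\"of --- is precisely the step that \emph{fails} in the closely analogous setting: Baranov, Belov and Borichev show that exact exponential systems in $L^2(-\pi,\pi)$ need not be hereditarily complete, so no argument at this level of generality can work. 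A correct proof must use something specific to the M\"untz condition $\sum 1/\lambda_n<\infty$, and your sketch never identifies what that is.

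The paper's proof supplies exactly the missing ingredient, and it makes the theorem almost immediate: by the Luxemburg--Korevaar theorem, $\Clo(a,b)$ coincides with the set of $L^2(a,b)$ functions admitting a Dirichlet series $\sum c_ne^{\lambda_n z}$ on $\Re z<b$, and (Lemma 3.1) the coefficients are recovered as $c_n=\langle f,r_n\rangle$. If $f$ is orthogonal to the mixed system, the $N_2$ conditions annihilate the coefficients with $n\in N_2$, so $f=\sum_{n\in N_1}\langle f,r_n\rangle e^{\lambda_n t}$; applying Luxemburg--Korevaar again to the subsequence $\{\lambda_n\}_{n\in N_1}$ (which still satisfies the hypotheses) places $f$ in the closed span of $\{e_n:n\in N_1\}$, and orthogonality to those exponentials then gives $\|f\|^2=0$. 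The key structural fact --- that a Dirichlet series over $N_1$ lying in $L^2(a,b)$ automatically belongs to the closed span of the corresponding subsystem --- is what replaces the division-by-$L_{N_1}$ and Phragm\'en--Lindel\"of step you were hoping for. I would encourage you to either import that theorem into your transform-side argument or abandon the canonical-product route here.
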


Our work has been inspired by \cite{Baranov2013} where it was proved that a complete and minimal exponential system
$\{e^{i\lambda_n t}:\,\, n\in \mathbb{N}\}$ in $L^2(-a,a)$ is hereditarily complete up to one-dimensional effect
(\cite[Theorem 1.1]{Baranov2013}). That is, if $\{g_{n}:\,\, n\in \mathbb{N}\}$ is the unique biorthogonal family
to the system $\{e^{i\lambda_n t}:\,\, n\in \mathbb{N}\}$ in $L^2(-a,a)$, then
for any disjoint union of two sets $N_1$ and $N_2$, such that
$\mathbb{N}=N_1\cup N_2$, the orthogonal complement in $L^2(a, b)$ to the closed span of the mixed system
\[
\{e^{i\lambda_n t}:\,\, n\in N_1\}\cup\{g_{n}:\,\, n\in N_2\},
\]
is at most one dimensional.
The latter phenomenon is possible since
there exists an exact exponential system $\{e^{i\lambda_n t}\}$ in $L^2 (-\pi,\pi)$ which is not hereditarily complete
(\cite[Theorem 1.3]{Baranov2013}).

\begin{remark}
We point out that our exponential system $E_{\Lambda}$ is not complete in $L^2(a,b)$ but it is hereditarily complete in $\Clo (a,b)$.
\end{remark}

The notion of hereditary completeness is closely related to the $Spectral\,\, Synthesis$ problem for linear operators
\cite{Markus1970,Baranov2013,Baranov2015,Baranov2022}.
We say that a bounded linear operator $T$ in a separable Hilbert space $H$ having a set of eigenvectors which is complete in $H$,
admits $Spectral\,\, Synthesis$ if for any invariant subspace $A$ of $T$, the set of eigenvectors of $T$ contained in $A$
is complete in $A$.
It follows by the results obtained by J. Wermer in \cite{Wermer1952} that an operator which is $\bf both$ compact and normal
admits spectral synthesis. However, if one of the two conditions does not hold, then the conclusion might not be true.
In particular, Wermer gave an example of a bounded normal operator which does not admit spectral synthesis \cite[Theorem 2]{Wermer1952}, while
A. S. Markus, amongst others, proved that there exist compact operators which do not admit spectral synthesis
(see \cite[Theorem 4.2]{Markus1970}).

Markus \cite[Theorem 4.1]{Markus1970} also showed that a compact operator with a trivial kernel and non-zero simple eigenvalues with corresponding eigenvectors $\{f_n\}$, admits spectral synthesis if and only if the family $\{f_n\}$ is hereditarily complete in $\cal H$.
Motivated by this result, and since our system $E_{\Lambda}$ is hereditarily complete in $\Clo (a,b)$,
we present in Section 4 a class of compact but not normal operators on $\Clo (a,b)$ which admit spectral synthesis.

\section{Properties of the space $\Clo (a,b)$}
\setcounter{equation}{0}

Subject to the condition $(\ref{LKcondition})$,
Luxemburg and Korevaar proved in \cite[Theorem 8.2]{LK} that a function
$f$ belongs to the space $\Clo (a,b)$ if and only if
$f(x)=g(x)$ almost everywhere on $(a,b)$, where $g$ is an analytic function in the half plane $\Re z <b$,
admitting the Dirichlet series representation
\[
g(z)=\sum_{n=1}^{\infty} a_n e^{\lambda_n z}\qquad a_n\in\mathbb{C},\quad \forall \,\,z\in \Re z<b,
\]
with the series converging uniformly on compact subsets of the half plane $\Re z <b$.

Let us rewrite the above result using another notation.
Consider the space of square integrable functions on $(a,b)$
that admit a Dirichlet series representation
\[
f(z)=\sum_{n=1}^{\infty} c_{n} e^{\lambda_n z}
\]
in the half-plane $\Re z<b$, converging uniformly on compacta.
Denote this space by $L^2 (a, b, E_{\Lambda})$, that is
\[
L^2 (a, b, E_{\Lambda}):=\left\{f\in L^2 (a, b):\,\, f(z)=\sum_{n=1}^{\infty}
c_{n} e^{\lambda_n z}\quad \forall\,\, z\in\Re z<b\right\}.
\]

The Luxemburg-Korevaar result is now restated as follows.
\begin{thmErdos}
Let $\{\lambda_n\}_{n=1}^{\infty}$ be a sequence of numbers satisfying the condition $(\ref{LKcondition})$.
Then
\[
\Clo (a,b)=L^2 (a, b, E_{\Lambda}).
\]
\end{thmErdos}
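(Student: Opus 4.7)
The statement is essentially a reformulation of the Luxemburg--Korevaar theorem quoted just above, so the task reduces to matching the two descriptions of the space. The plan is to verify the two inclusions $\mathcal{H}_\Lambda(a,b)\subset L^2(a,b,E_\Lambda)$ and $L^2(a,b,E_\Lambda)\subset \mathcal{H}_\Lambda(a,b)$ by unpacking the defining conditions.

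First I would take $f \in \mathcal{H}_\Lambda(a,b)$; by the Luxemburg--Korevaar theorem there is an analytic function $g$ on the half-plane $\{\Re z<b\}$ admitting a Dirichlet series representation $g(z)=\sum_{n=1}^{\infty} a_n e^{\lambda_n z}$ that converges uniformly on compact subsets, and with $f=g$ almost everywhere on $(a,b)$. Choosing the restriction $g|_{(a,b)}$ as the distinguished representative of the $L^2$-class of $f$, we see that $f\in L^2(a,b)$ and, as a function on $\{\Re z<b\}$, is given by the Dirichlet series, so $f$ meets the defining condition of $L^2(a,b,E_\Lambda)$. For the reverse inclusion I would start from $f\in L^2(a,b,E_\Lambda)$: by definition $f\in L^2(a,b)$ and $f(z)=\sum_{n=1}^\infty c_n e^{\lambda_n z}$ on $\{\Re z<b\}$ with uniform convergence on compacta. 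The right-hand side is analytic on $\{\Re z<b\}$ and agrees with $f$ on the real interval $(a,b)$, so the Luxemburg--Korevaar characterization directly places $f$ in $\mathcal{H}_\Lambda(a,b)$.

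The only subtle point, and what I expect to be the main (mild) obstacle, is the bookkeeping that identifies an $L^2$-equivalence class with its (unique) analytic representative whenever such a representative exists. Once this convention is fixed, the two sets are identical by direct comparison of definitions, and Theorem A follows with no additional analytic input beyond the cited Luxemburg--Korevaar result.
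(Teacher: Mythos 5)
Your proposal is correct and matches the paper exactly: the paper gives no independent proof of Theorem A, presenting it purely as a notational restatement of the cited Luxemburg--Korevaar theorem \cite[Theorem 8.2]{LK}, which is precisely the definition-unpacking you carry out. The only content is the identification of an $L^2$-class with its analytic Dirichlet-series representative, which you correctly flag as a bookkeeping convention rather than a mathematical obstacle.
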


The above result depends heavily on deriving the distance lower bound $(\ref{distanceresult})$:
let $e_{n}(x):=e^{\lambda_n x}$ and let $E_{\Lambda_{n}}:=E_{\Lambda}\setminus \{e_{n}\}$,
that is $E_{\Lambda_{n}}$ is the system $E_{\Lambda}$ excluding the function $e^{\lambda_n x}$.
Denote by $D_{n}$ the $\bf Distance$ between $e_{n}$
and the closed span of $E_{\Lambda_{n}}$ in $L^2(a,b)$,
\[
D_{n}:=\inf_{g\in \overline{\text{span}} (E_{\Lambda_{n}})} ||e_{n}-g||_{L^2 (a,b)}.
\]
It was shown (\cite[relation (1.9)]{LK}) that for every $\epsilon>0$, there is a positive constant $m_{\epsilon}$
which does not depend on $n\in\mathbb{N}$, but only on $\Lambda$ and the segment $(a,b)$, so that
\begin{equation}\label{distanceresult}
D_n\ge m_{\epsilon}\cdot e^{(b-\epsilon)\lambda_n}.
\end{equation}

\section{Hereditary Completeness: Proof of Theorem $\ref{hereditary}$}
\setcounter{equation}{0}

In this section we prove our hereditary completeness result by first obtaining the Fourier type series representation
$(\ref{representationf})$ for functions in $\Clo (a,b)$.

\begin{lemma}\label{hereditarylemma}
Let the sequence $\Lambda=\{\lambda_n\}_{n=1}^{\infty}$ satisfy the condition $(\ref{LKcondition})$ and
consider the space $\Clo (a,b)$. Then the following hold.

(I) There exists a family of functions
\[
r_{\Lambda}=\{r_{n}:\,\, n\in\mathbb{N}\}\subset \Clo (a,b)
\]
so that it is the unique biorthogonal sequence to $E_{\Lambda}$ in $\Clo (a,b)$.
For every $\epsilon>0$ there is a constant $m_{\epsilon}>0$, independent of $n\in\mathbb{N}$, so that
\begin{equation}\label{rnkbound}
||r_{n}||_{L^2 (a,b)} \le  m_{\epsilon}e^{(-b+\epsilon)\lambda_n},\qquad \forall\,\, n\in\mathbb{N}.
\end{equation}

$(II)$ For each $f$ in $\Clo (a,b)$
there exists a function $g$ analytic in the half-plane $\Re z<b$,
so that
\[
f(x)=g(x)\qquad \text{almost everywhere on}\quad (a,b),
\]
with $g$ admitting the Fourier-type Dirichlet series representation
\begin{equation}\label{representationf}
g(z)=\sum_{n=1}^{\infty}\langle f, r_{n} \rangle \cdot e^{\lambda_n z},
\end{equation}
converging uniformly on compact subsets of the half-plane $\Re z<b$.
\end{lemma}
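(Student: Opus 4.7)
The plan is to first construct the biorthogonal family $\{r_{n}\}$ with the claimed norm bound, and then derive the Fourier--Dirichlet representation by an approximation argument whose key input is precisely that bound.

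For part $(I)$, the distance estimate $(\ref{distanceresult})$ gives $D_n>0$, so $E_{\Lambda}$ is minimal. I would set $V_n:=\overline{\mathrm{span}}(E_{\Lambda_n})\subset\Clo (a,b)$ and orthogonally decompose $e_{n}=v_n+w_n$ with $v_n\in V_n$ and $w_n\in V_n^{\perp}\cap\Clo (a,b)$; then $\|w_n\|_{L^2(a,b)}=D_n$. Setting $r_n:=w_n/\|w_n\|^{2}$ places $r_n$ in $\Clo (a,b)$ and yields $\langle e_m,r_n\rangle=\delta_{mn}$ by construction, while $(\ref{rnkbound})$ is the direct translation
\[
\|r_n\|_{L^2(a,b)}=\frac{1}{D_n}\le m_{\epsilon}^{-1}e^{-(b-\epsilon)\lambda_n}
\]
after relabelling the constant. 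Uniqueness of $\{r_n\}$ in $\Clo (a,b)$ is immediate since any difference of two biorthogonal families would be orthogonal to $E_{\Lambda}$ while also lying in its closed span, hence zero.

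For part $(II)$, take $f\in\Clo (a,b)$ and pick finite linear combinations $f_k=\sum_{m=1}^{N_k}a_{m,k}e_{m}$ with $f_k\to f$ in $L^2(a,b)$. Biorthogonality gives $a_{m,k}=\langle f_k,r_m\rangle$, and continuity of the inner product yields $a_{m,k}\to\alpha_m:=\langle f,r_m\rangle$ for each fixed $m$. Using $|\alpha_m|\le\|f\|_{L^2(a,b)}\|r_m\|_{L^2(a,b)}$ together with $(\ref{rnkbound})$, for every $\epsilon>0$ the series
\[
\tilde g(z):=\sum_{m=1}^{\infty}\alpha_m e^{\lambda_m z}
\]
is dominated on $\Re z\le b-2\epsilon$ by $C_{\epsilon}\|f\|_{L^2(a,b)}\sum_{m=1}^{\infty}e^{-\epsilon\lambda_m}$, a finite quantity by the gap condition $\lambda_{n+1}-\lambda_n\ge c$. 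Hence the series converges uniformly on compact subsets of $\Re z<b$ and defines an analytic function $\tilde g$ there.

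The main obstacle, and the heart of the argument, is identifying $\tilde g$ with the Luxemburg--Korevaar function $g$ from Theorem A. I would establish $f_k\to\tilde g$ uniformly on compacta of $\Re z<b$ by splitting
\[
f_k(z)-\tilde g(z)=\sum_{m=1}^{N_k}(a_{m,k}-\alpha_m)e^{\lambda_m z}-\sum_{m>N_k}\alpha_m e^{\lambda_m z};
\]
assuming $N_k\to\infty$ (otherwise $f$ is a finite exponential sum and the claim is trivial from biorthogonality), the tail is uniformly small by the same geometric estimate, while for the first sum the inequality
\[
|a_{m,k}-\alpha_m|=|\langle f_k-f,r_m\rangle|\le\|f_k-f\|_{L^2(a,b)}\|r_m\|_{L^2(a,b)}
\]
together with $(\ref{rnkbound})$ gives
\[
\sum_{m=1}^{N_k}|a_{m,k}-\alpha_m|e^{\lambda_m\Re z}\le C_{\epsilon}\|f_k-f\|_{L^2(a,b)}\sum_{m=1}^{\infty}e^{-\epsilon\lambda_m}\longrightarrow 0
\]
as $k\to\infty$, uniformly on $\Re z\le b-2\epsilon$. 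Passing to a subsequence of $\{f_k\}$ converging almost everywhere on $(a,b)$ then identifies $\tilde g$ with $f$ a.e.\ on $(a,b)$; comparing with $g=f$ a.e.\ from Theorem A and invoking the identity principle on the connected domain $\Re z<b$ gives $\tilde g\equiv g$, which is exactly the representation $(\ref{representationf})$.
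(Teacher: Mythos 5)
Your proposal is correct. Part (I) is in substance identical to the paper's proof: the orthogonal decomposition of $e_{n}$ against $\overline{\mathrm{span}}(E_{\Lambda_{n}})$, the normalization $r_{n}=w_{n}/\|w_{n}\|^{2}$, and the identity $\|r_{n}\|_{L^2(a,b)}=1/D_{n}$ combined with $(\ref{distanceresult})$; your one-line uniqueness argument is the standard one the paper invokes by citation. Part (II) is where you genuinely diverge, and your route is sound. The paper starts from Theorem A: it takes the Luxemburg--Korevaar expansion $f(z)=\sum_{m}c_{m}e^{\lambda_m z}$ as given and identifies $c_{n}=\langle f,r_{n}\rangle$ by showing that the tail $Q_{n}=\sum_{m>n}c_{m}e^{\lambda_m z}$ lies in the closed span of $\{e_{m}:m\ge n+1\}$ (a second application of Theorem A, to the truncated exponent sequence) and is therefore annihilated by $r_{n}$. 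You instead build the candidate series $\tilde g=\sum_{m}\langle f,r_{m}\rangle e^{\lambda_m z}$ from scratch, prove its locally uniform convergence from $(\ref{rnkbound})$ and the separation condition, and identify it with $f$ by approximating $f$ with finite exponential sums, controlling both the coefficient error $|\langle f_k-f,r_m\rangle|$ and the tail with the same geometric estimate. This uses only the definition of \Clo $(a,b)$ as a closed span plus the norm bound, and in effect re-derives the analytic-continuation content of Theorem A rather than quoting it; indeed your final appeal to Theorem A and the identity principle is superfluous, since $\tilde g$ itself already is the analytic function the lemma asks for. The paper's proof is shorter given that Theorem A is cited anyway; yours is more self-contained and makes visible that $(\ref{representationf})$ follows from minimality with quantitative bounds alone. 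The only point to tidy is the handling of the degrees $N_k$: rather than the aside about $f$ being a finite exponential sum, simply pad $f_k$ with zero coefficients so that $N_k\to\infty$, noting that $\langle f_k,r_m\rangle=0=a_{m,k}$ for $m>N_k$ in any case.
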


\begin{proof}

$\quad$

$(I)$ Recall the distance result $(\ref{distanceresult})$ and the system  $E_{\Lambda_{n}}$.
Now, since $L^2(a,b)$ is a Hilbert space, it then follows that there exists a unique element
in $\overline{\text{span}}(E_{\Lambda_{n}})$ in $L^2(a,b)$,that we denote by $\Phi_{n}$,
so that
\[
||e_{n}-\Phi_{n}||_{L^2 (a,b)}=
\inf_{g\in \overline{\text{span}}(E_{\Lambda_{n}})}||e_{n}-g||_{L^2 (a,b)}=D_{n}.
\]
The function $e_{n}-\Phi_{n}$ is orthogonal to all the elements of the closed span of $E_{\Lambda_{n}}$
in $L^2 (a,b)$, hence to $\Phi_{n}$ itself.
Therefore
\[
\langle e_{n}-\Phi_{n}, e_{n}-\Phi_{n} \rangle=\langle e_{n}-\Phi_{n}, e_{n}\rangle.
\]
Hence
\[
(D_{n})^2=\langle e_{n}-\Phi_{n}, e_{n}\rangle.
\]
Next, we define
\[
r_{n}(x):=\frac{e_{n}(x)-\Phi_{n}(x)}{(D_{n}) ^2},\qquad\text{thus}\qquad ||r_{n}||_{L^2 (a,b)}=\frac{1}{D_{n}}.
\]
It then follows that $\langle r_{n}, e_{n}\rangle=1$ and $r_{n}$ is orthogonal to all the
elements of the system $E_{\Lambda_{n}}$.
Thus $\{r_{n}:\,\, n\in\mathbb{N}\}$ is biorthogonal  to the system $E_{\Lambda}$.
Since $\Phi_{n}\in\overline{\text{span}}(E_{\Lambda_{n}})$ in $L^2 (a,b)$ it follows that
$r_{n}\in\overline{\text{span}}(E_{\Lambda})$ in $L^2 (a,b)$.
Moreover, since $||r_{n}||_{L^2 (a,b)}=1/D_{n}$, then from $(\ref{distanceresult})$ we obtain $(\ref{rnkbound})$.\\

\smallskip

$(II)$ From Theorem $A$ we know that if $f$ belongs to $\Clo (a,b)$ then $f$
is extended analytically to the half-plane $\Re z<b$, admitting the series representation
$f(z)=\sum_{m=1}^{\infty} c_{m} e^{\lambda_m z}$ for some coefficients $c_m\in\mathbb{C}$, converging uniformly on compacta.
We will show below that
\[
c_{n}=\langle f, r_{n} \rangle\qquad \text{for\,\, all}\quad n\in\mathbb{N}.
\]

Fix some $n\in\mathbb{N}$. Then
\begin{eqnarray}
\langle f, r_{n} \rangle & = &
\int_{a}^{b}\overline{r_{n}(x)} \cdot \left(\sum_{m=n+1}^{\infty} c_{m} e^{\lambda_m x}\right)\, dx +
\int_{a}^{b}\overline{r_{n}(x)} \cdot \left(\sum_{m=1}^{n} c_{m} e^{\lambda_m x}\right) \, dx
\nonumber\\
& = &
\int_{a}^{b} \overline{r_{n}(x)}\cdot\left(\sum_{m=n+1}^{\infty} c_{m} e^{\lambda_m x}\right)\, dx + c_{n}\label{cnk}
\end{eqnarray}
with the last step valid due to the biorthogonality.

We claim that
\begin{equation}\label{zerozero1}
\int_{a}^{b} \overline{r_{n}(x)}\cdot \left(\sum_{m=n+1}^{\infty} c_{m} e^{\lambda_m x}\right)\, dx=0.
\end{equation}

Indeed, since $f\in L^2(a,b)$, then the Dirichlet series
\[
Q_n(z):=f(z)-\sum_{m=1}^{n} c_{m} e^{\lambda_m z}=\sum_{m=n+1}^{\infty} c_{m} e^{\lambda_m z}
\]
also belongs to $L^2(a,b)$. It then follows from Theorem $A$ that $Q_n$ belongs
to the closed span of the exponential system
\[
E_{\Lambda (n+1)}:=E_{\Lambda}\setminus\{e^{\lambda_m x}:\, m=1,2,\dots,n\}=\{e^{\lambda_m x}:\, m\ge n+1\}
\]
in $L^2(a,b)$. Thus, for every $\epsilon>0$, there is a function $f_{\epsilon}$ in the span of $E_{\Lambda (n+1)}$
so that $||Q_n-f_{\epsilon}||_{L^2(a,b)}<\epsilon$. From biorthogonality we have
\[
\int_{a}^{b} \overline{r_{n}(x)}\cdot f_{\epsilon}(x)\, dx=0.
\]
Combining this with the Cauchy-Schwarz inequality we get
\begin{eqnarray*}
\left|\int_{a}^{b} \overline{r_{n}(x)}\cdot Q_n(x)\, dx\right| & = &
\left|\int_{a}^{b} \overline{r_{n}(x)}\cdot (Q_n (x)-f_{\epsilon}(x))\, dx\right|\\
& \le & \epsilon\cdot ||r_{n}||_{L^2(a,b)}.
\end{eqnarray*}
The arbitrary choice of $\epsilon$ implies that $(\ref{zerozero1})$ is true.

Finally, replacing $(\ref{zerozero1})$ in $(\ref{cnk})$
shows that $\langle f, r_{n} \rangle=c_n$, that is  $(\ref{representationf})$ holds.

\end{proof}

\subsection{Proof of Theorem $\ref{hereditary}$}

Let us write the set $\mathbb{N}$ as an arbitrary disjoint union of two sets $N_1$ and $N_2$.
In order to obtain the hereditary completeness of $E_{\Lambda}$ in $\Clo (a,b)$,
we must show that the closed span of the mixed system
\[
E_{1,2}:=\{e_{n}:\,\, n\in N_1\}\cup\{r_{n}:\,\, n\in N_2\},
\]
in $L^2 (a, b)$ is equal to $\Clo (a,b)$.

Denote by $W_{\Lambda_{1,2}}$ the closed span of $E_{1,2}$ in $L^2 (a, b)$.
Obviously $W_{\Lambda_{1,2}}$ is a subspace of $\Clo (a,b)$.
Let $W^{\perp}_{\Lambda_{1,2}}$ be the orthogonal complement of $W_{\Lambda_{1,2}}$ in $\Clo (a,b)$, that is
\[
W^{\perp}_{\Lambda_{1,2}}=\{f\in \Clo (a,b):\,\, \langle f, g \rangle =0\quad \text{for\,\, all}\,\, g\in W_{\Lambda_{1,2}}\}.
\]
Now, if $f\in W^{\perp}_{\Lambda_{1,2}}\subset \Clo (a,b)$ then by $(\ref{representationf})$ we have
\[
f(t)=\sum_{n=1}^{\infty} \langle f, r_{n} \rangle \cdot e^{\lambda_n t},
\quad \text{almost everywhere on}\,\, (a,b)
\]
and of course $\langle f, r_{n} \rangle =0$ for all $n\in N_2$.
Thus,
\[
f(t)=\sum_{n\in N_1}\langle f, r_{n} \rangle \cdot e^{\lambda_n t},
\quad \text{almost everywhere on}\,\, (a,b).
\]

Since the above Dirichlet series $f$ is analytic in the half-plane $\Re z<b$
and belongs to $L^2 (a, b)$, it follows from Theorem $A$ that $f$ belongs to the closed span of the exponential system
\[
E_{\Lambda, N_1}:=\{e_{n}:\,\, n\in N_1\}
\]
in $L^2 (a, b)$. Hence, for every $\epsilon>0$ there is a function $g_{\epsilon}$ in $\text{span}(E_{\Lambda, N_1})$ so that
$||f-g_{\epsilon}||_{L^2(a,b)}<\epsilon$. Write
\[
\langle f, f \rangle = \langle f, f-g_{\epsilon} \rangle + \langle f, g_{\epsilon} \rangle.
\]
Since $f\in W^{\perp}_{\Lambda_{1,2}}$ then $\langle f, e_{n} \rangle =0$ for all $n\in N_1$, thus
$\langle f, g_{\epsilon} \rangle=0$. Therefore,
\[
||f||^2_{L^2(\gamma,\beta)}=\langle f, f \rangle = \langle f, f-g_{\epsilon}\rangle \le ||f||_{L^2(a,b)}\cdot ||f-g_{\epsilon}||_{L^2(a,b)}
\le ||f||_{L^2(a,b)}\cdot \epsilon.
\]
Hence
\[
||f||_{L^2(a,b)}\le \epsilon.
\]
Clearly this means that $f(x)=0$ almost everywhere on $(a, b)$, hence $W^{\perp}_{\Lambda_{1,2}}=\{\mathbf{0}\}$.
Thus $W_{\Lambda_{1,2}}=\Clo (a,b)$, meaning that the exponential system $E_{\Lambda}$
is hereditarily complete in the space $\Clo (a,b)$.
The proof of Theorem $\ref{hereditary}$ is now complete.

\section{Compact operators on $\Clo (a,b)$ admitting Spectral Synthesis}
\setcounter{equation}{0}

Our goal in this section is to present a class of compact but not normal operators on $\Clo (a,b)$ that admit Spectral Synthesis.
The operators constructed will have the properties presented below, a result obtained by Markus.

\begin{thmMarkus}\label{Compact}\cite[Theorem 4.1]{Markus1970}

Let $T:\cal{H}$$\to\cal{H}$ be a compact operator such that

(i) its kernel is trivial and

(ii) its non-zero eigenvalues are $\bf simple$.

Let $\{f_n\}_{n\in\mathbb{N}}$ be the corresponding sequence of eigenvectors.
Then $T$ admits $\bf{Spectral\,\, Synthesis}$ if and only if $\{f_n\}_{n\in\mathbb{N}}$ is $\bf Hereditarily\,\, complete$ in $\cal{H}$.
\end{thmMarkus}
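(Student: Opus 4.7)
The plan is to exploit the Riesz spectral projections available for the compact operator $T$. Because the non-zero eigenvalues $\mu_n$ are simple, the spectral projection $P_n$ associated with $\mu_n$ (obtained by integrating $(\zeta-T)^{-1}$ over a small contour encircling $\mu_n$) is rank one with range $\mathrm{span}(f_n)$; a short biorthogonality calculation gives $P_n x=\langle x,g_n\rangle f_n$, where $\{g_n\}$ is the unique biorthogonal system to $\{f_n\}$ (existence coming from minimality of $\{f_n\}$, itself a consequence of the trivial kernel and simplicity of the eigenvalues). A parallel analysis of the compact operator $T^*$ shows that each $g_n$ is an eigenvector of $T^*$ corresponding to $\overline{\mu_n}$. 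Completeness of $\{f_n\}$ in $\mathcal{H}$ is built into the notion of spectral synthesis, and is implied by hereditary completeness on the other side, so both directions begin from a complete and minimal eigensystem with well-defined biorthogonal sequence.

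The structural key is that every closed $T$-invariant subspace $A\subset\mathcal{H}$ is automatically $P_n$-invariant for every $n$. This follows from the contour-integral definition of $P_n$ together with the fact, obtained by analytic continuation from the Neumann series valid for large $|\zeta|$, that $(\zeta-T)^{-1}A\subset A$ throughout the resolvent set of $T$. Since $P_n$ has one-dimensional range $\mathrm{span}(f_n)$, the image $P_n(A)$ is either $\{0\}$ or $\mathrm{span}(f_n)$, with the latter holding precisely when $f_n\in A$. Hence, setting $N_1(A):=\{n:f_n\in A\}$, we obtain the pivotal dual conclusion $g_n\perp A$ for every $n\notin N_1(A)$.

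For hereditary completeness $\Rightarrow$ spectral synthesis, let $A$ be a closed $T$-invariant subspace, set $N_1:=N_1(A)$ and $N_2:=\mathbb{N}\setminus N_1$, and suppose $h\in A$ satisfies $h\perp f_n$ for all $n\in N_1$. By the previous paragraph $h\perp g_n$ for all $n\in N_2$ as well, so $h$ is orthogonal to the full mixed family $\{f_n:n\in N_1\}\cup\{g_n:n\in N_2\}$; hereditary completeness forces $h=0$, and therefore $A=\overline{\mathrm{span}}\{f_n:n\in N_1\}$. For the converse, given any partition $\mathbb{N}=N_1\cup N_2$, form $K:=\{g_n:n\in N_2\}^\perp$. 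Since $\overline{\mathrm{span}}\{g_n:n\in N_2\}$ is $T^*$-invariant, $K$ is a closed $T$-invariant subspace, and biorthogonality forces the eigenvectors of $T$ lying in $K$ to be exactly $\{f_n:n\in N_1\}$. Spectral synthesis then yields $K=\overline{\mathrm{span}}\{f_n:n\in N_1\}$, and a vector orthogonal to both $\{f_n:n\in N_1\}$ and $\{g_n:n\in N_2\}$ lies in $K\cap K^\perp=\{0\}$.

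The main obstacle I expect is not the logical architecture, which is clean and symmetric, but the dual spectral bookkeeping: verifying rigorously that $\{g_n\}$ consists of eigenvectors of $T^*$ and that the Riesz projection decomposes as $P_n=\langle\cdot,g_n\rangle f_n$. Both facts rest on the simplicity of $\mu_n$ — which ensures that the generalized eigenspace at $\mu_n$ is precisely $\mathrm{span}(f_n)$ with no nilpotent part — and on completeness of $\{f_n\}$, which permits one to recover a vector from its biorthogonal coefficients. Once these identifications are in place, the contour-integral argument that propagates $T$-invariance to $P_n$-invariance is essentially automatic.
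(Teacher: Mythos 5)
The paper does not prove this statement at all: it is Theorem B, quoted verbatim from Markus \cite[Theorem 4.1]{Markus1970} and used as a black box, so there is no internal proof to compare against. Your argument is, as far as I can check, a correct and essentially self-contained reconstruction of the standard proof of Markus's theorem. The two load-bearing steps are both sound: (a) a closed $T$-invariant subspace $A$ is invariant under every resolvent $(\zeta-T)^{-1}$ (Neumann series for large $|\zeta|$, then analytic continuation, which is legitimate here because the spectrum of a compact operator is countable and hence has connected complement), and therefore under each Riesz projection $P_n=\langle\cdot,g_n\rangle f_n$, which yields the key dichotomy ``$f_n\in A$ or $g_n\perp A$''; (b) the converse direction correctly passes to $K=\{g_n:n\in N_2\}^{\perp}$, which is $T$-invariant because the $g_n$ are eigenvectors of $T^*$, and then uses $K\cap K^{\perp}=\{0\}$. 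The identification of the set of eigenvectors of $T$ lying in $A$ with $\{f_n:n\in N_1(A)\}$ uses the trivial kernel and simplicity exactly where they are needed.

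One caveat you should make explicit: your proof needs ``simple'' to mean \emph{algebraically} simple, i.e.\ the root subspace at $\mu_n$ equals $\mathrm{span}(f_n)$, since otherwise the Riesz projection $P_n$ is not rank one and the decomposition $P_n=\langle\cdot,g_n\rangle f_n$ fails. That is the intended reading in Markus's paper (which is phrased in terms of root subspaces), and it holds for the diagonal-type operators the present paper applies Theorem B to, but the statement as transcribed here only says ``eigenvalues are simple,'' so a sentence ruling out nontrivial Jordan structure would close the only real gap. Your construction of $g_n$ from $P_n$ also quietly establishes minimality of $\{f_n\}$, which is needed for ``hereditarily complete'' to be well defined; it would be worth saying that in one line rather than attributing minimality vaguely to the trivial kernel.
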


\subsection{A class of operators on $\Clo (a,b)$}

Let $f$ be a function in the space $\Clo (a,b)$. Then from Lemma $\ref{hereditarylemma}$ we have
\[
f(z)=\sum_{n=1}^{\infty} \langle f , r_n\rangle\cdot  e_n(z),
\]
with the series converging uniformly on compact subsets of the half-plane $\Re z<b$.
From relation $(\ref{rnkbound})$, for every $\epsilon>0$ there exists some $m_{\epsilon}>0$, independent of $n\in\mathbb{N}$, so that
\[
|\langle f , r_n\rangle|\le ||f||_{L^2 (a,b)}\cdot m_{\epsilon}e^{(-b+\epsilon)\lambda_n}.
\]

Let $\delta>0$ and choose a sequence $\{u_n\}_{n=1}^{\infty}$ of distinct non-zero complex numbers such that
\begin{equation}\label{un}
|u_n|\le  e^{-\delta\lambda_n}.
\end{equation}
Clearly $u_n\to 0$ as $n\to\infty$.
\begin{example}\label{example}
For $\delta>0$, we may take $u_n= e^{-\delta\lambda_n}$.
\end{example}

It follows from above that for every $0<\epsilon<\delta$,
there exists some $m_{\epsilon}>0$, independent of $n\in\mathbb{N}$ and $f\in \Clo (a,b)$, so that

\begin{equation}\label{upperbound}
|\langle f , r_n\rangle\cdot u_n|\le ||f||_{L^2 (a,b)}\cdot m_{\epsilon}e^{(-b-\delta+\epsilon)\lambda_n}.
\end{equation}

Now, define
\begin{equation}\label{operator}
T(f(z)):=\sum_{n=1}^{\infty} \langle f , r_n\rangle\cdot u_n\cdot  e^{\lambda_n z}.
\end{equation}

One deduces from $(\ref{upperbound})$ that $T(f(z))$
is a function analytic in the half-plane $\Re z<b+\delta$, converging uniformly on compact subsets of this half-plane, thus on
the interval $[a,b]$ as well. The uniform convergence on $[a,b]$ implies that

$(i)$ $T(f(z))$ belongs to the space $\Clo (a,b)$.

$(ii)$ The series $T(f)$ converges in the $L^2(a,b)$ norm.

\begin{remark}
It also follows from $(\ref{upperbound})-(\ref{operator})$ that there exists some $N>0$ so that
\[
||T(f)||_{L^2(a,b)}\le N||f||_{L^2(a,b)}\quad \text{for\,\, all}\quad f\in \Clo (a,b).
\]
Therefore, $T:\Clo (a,b)\to \Clo (a,b)$ defines a Bounded Linear Operator.
We denote by $T^*$  its Adjoint operator.
\end{remark}

We will show that
\begin{theorem}\label{spectralsynthesis}
The operator $T:\Clo (a,b)\to \Clo (a,b)$ defined in $(\ref{operator})$ is compact, not normal, and admits spectral synthesis.
\end{theorem}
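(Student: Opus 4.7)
The plan is to establish four things in sequence: the eigenstructure required by Theorem B, compactness, failure of normality, and finally spectral synthesis via Theorem B combined with Theorem \ref{hereditary}.

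First I would compute $T(e_m)=u_m e_m$ directly from $(\ref{operator})$ and the biorthogonality $\langle e_m, r_n\rangle = \delta_{mn}$, which identifies each $u_m$ as an eigenvalue with eigenvector $e_m$; the $u_n$ are nonzero and pairwise distinct by assumption. To see that these are the \emph{only} nonzero eigenvalues and that each has a one-dimensional eigenspace, if $T(f)=\mu f$ with $\mu\neq 0$ I would apply the Fourier representation $(\ref{representationf})$ to both sides; uniqueness of the Dirichlet expansion (which is part of Theorem A) forces $(u_n-\mu)\langle f,r_n\rangle=0$ for every $n$, so exactly one coefficient $\langle f,r_n\rangle$ can be nonzero, giving $\mu=u_N$ and $f\in\mathrm{span}(e_N)$. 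Likewise $T(f)=0$ implies $u_n\langle f,r_n\rangle=0$ for every $n$, hence $\langle f,r_n\rangle=0$ for all $n$, so $f=0$ by $(\ref{representationf})$. Thus $T$ has trivial kernel, simple nonzero eigenvalues $\{u_n\}$, and eigenvectors precisely $E_\Lambda$.

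For compactness I would introduce the finite-rank truncations $T_N(f):=\sum_{n=1}^{N}\langle f, r_n\rangle u_n e^{\lambda_n z}$ and show $\|T-T_N\|\to 0$. Combining the triangle inequality in $L^2(a,b)$, the coefficient bound $(\ref{upperbound})$, and the elementary estimate $\|e^{\lambda_n\,\cdot}\|_{L^2(a,b)}\le e^{b\lambda_n}/\sqrt{2\lambda_n}$, for any $0<\epsilon<\delta$ and every $f\in\Clo$ one obtains
\[
\|(T-T_N)f\|_{L^2(a,b)}\le m_\epsilon\,\|f\|_{L^2(a,b)}\sum_{n=N+1}^{\infty}\frac{e^{(-\delta+\epsilon)\lambda_n}}{\sqrt{2\lambda_n}}.
\]
Since the gap condition yields $\lambda_n\ge cn$, the series on the right converges, its tail tends to zero as $N\to\infty$, and $T$ is therefore a norm limit of finite-rank operators, hence compact.

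To rule out normality I would argue by contradiction: a compact normal operator admits an orthogonal basis of eigenvectors, and combined with the simplicity proved above this would force $\{e_n\}$ to be pairwise orthogonal in $L^2(a,b)$; however $\langle e_m,e_n\rangle = (e^{(\lambda_m+\lambda_n)b}-e^{(\lambda_m+\lambda_n)a})/(\lambda_m+\lambda_n)>0$, a contradiction, so $T$ is not normal. Finally, since the hypotheses of Theorem B are verified and the eigenvectors of $T$ coincide with $E_\Lambda$, which is hereditarily complete in $\Clo$ by Theorem \ref{hereditary}, Theorem B delivers spectral synthesis. The step I expect to require the most care is the eigenstructure analysis, where the uniqueness of the Dirichlet expansion provided by Theorem A is essential to exclude stray eigenvectors lying outside a single $\mathrm{span}(e_n)$; the compactness and non-normality arguments are then essentially bookkeeping on top of the estimate $(\ref{upperbound})$ and the gap condition.
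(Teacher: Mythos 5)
Your proposal is correct and follows essentially the same route as the paper: compactness via norm-convergent finite-rank truncations using the bound $(\ref{upperbound})$ and the gap condition, non-normality from the non-orthogonality of the eigenvectors $e_n$ of a would-be normal compact operator, and spectral synthesis from Markus's Theorem B together with the hereditary completeness of $E_\Lambda$. The only (cosmetic) difference is in the eigenstructure analysis, where you extract $(u_n-\mu)\langle f,r_n\rangle=0$ from uniqueness of the Dirichlet coefficients in $(\ref{representationf})$, while the paper obtains the same identity by first computing $T^*r_n=\overline{u_n}\,r_n$ and pairing $Tf=\mu f$ against $r_n$.
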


\begin{proof}
Our result follows from Lemmas $\ref{compactoperator}-\ref{notnormal}$, Theorem $B$, and the fact that the system $E_{\Lambda}$ is hereditarily complete in $\Clo (a,b)$.
\end{proof}

We remark that if we choose $u_n= e^{-\delta\lambda_n}$ as in Example $\ref{example}$, then $T(f(z))$ is equal to the translation $f(z-\delta)$,
which is well defined on $\Clo (a,b)$ since any function in this space extends analytically to the half-plane $\Re z<b$. Thus we have

\begin{corollary}
For any $\delta>0$, the shift operator $T_{\delta}(f):=f(z-\delta)$  defined on $\Clo (a,b)$ 
is compact, not normal, and admits spectral synthesis.
\end{corollary}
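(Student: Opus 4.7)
The plan is to recognize the shift operator $T_\delta$ as a particular instance of the operator $T$ constructed in (\ref{operator}), corresponding to the choice $u_n = e^{-\delta\lambda_n}$ from Example \ref{example}, and then to invoke Theorem \ref{spectralsynthesis} directly. The weights $u_n = e^{-\delta\lambda_n}$ are distinct, non-zero, and satisfy the admissibility condition (\ref{un}) with equality, since the $\lambda_n$ are distinct positive reals and $\delta>0$, so they lie in the class of weights for which the operator $T$ was built.

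The key step will be verifying the pointwise identification $T_\delta(f) = T(f)$ on $\Clo(a,b)$. Given $f \in \Clo(a,b)$, I would apply the Fourier-type representation (\ref{representationf}) from Lemma \ref{hereditarylemma}, which gives an analytic extension
\[
f(z) = \sum_{n=1}^{\infty} \langle f, r_n\rangle \cdot e^{\lambda_n z}
\]
on the half-plane $\Re z < b$, with the series converging uniformly on compact subsets. For $z$ with $\Re z < b+\delta$ one has $\Re(z-\delta) < b$, so substitution yields
\[
f(z-\delta) = \sum_{n=1}^{\infty} \langle f, r_n\rangle \cdot e^{-\delta\lambda_n} \cdot e^{\lambda_n z},
\]
which is precisely $T(f)(z)$ with $u_n = e^{-\delta\lambda_n}$. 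Restricting to $z \in [a,b]$ and comparing against (\ref{operator}) shows $T_\delta(f) = T(f)$ as elements of $\Clo(a,b)$.

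With this identification in hand, Theorem \ref{spectralsynthesis} immediately delivers the three desired properties: $T_\delta$ is compact, not normal, and admits spectral synthesis. I do not expect a genuine obstacle, as all of the substantive analytic work, in particular the upper bound (\ref{upperbound}) that forces $T(f) \in \Clo(a,b)$ and controls the norm, has already been done in Section 4. The only minor point to verify is that $T_\delta$ is well-defined on $\Clo(a,b)$, which follows from Theorem A: every $f \in \Clo(a,b)$ extends analytically to $\Re z < b$, so $f(z-\delta)$ makes pointwise sense on $(a,b)$, and the bound (\ref{upperbound}) ensures the result lies back in $\Clo(a,b)$.
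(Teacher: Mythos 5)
Your proposal is correct and follows essentially the same route as the paper: the authors likewise identify $T_{\delta}$ with the operator $T$ of $(\ref{operator})$ for the choice $u_n=e^{-\delta\lambda_n}$ from Example $\ref{example}$ and then invoke Theorem $\ref{spectralsynthesis}$. Your explicit verification of the identification via substituting $z-\delta$ into the Dirichlet series $(\ref{representationf})$ is a welcome extra detail that the paper only states in passing.
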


\subsection{Auxiliary results: Lemmas $\ref{compactoperator}$, $\ref{Eigenvalues}$, $\ref{notnormal}$, and $\ref{TheAdjoint}$}

In the following four lemmas, we prove that the operator $T$ is compact, we obtain various properties of $T$
as those stated in Theorem $B$, and we show that $T$ is not normal.
Finally, a Fourier type series representation is derived for the adjoint $T^*$.  \\

\smallskip

If $L$ is a bounded operator on $\Clo (a,b)$, we use $\| L \|$ to
denote the operator norm of $L$ which is the supremum of the set
$ \{ \| L(f)\|_{L^2(a,b)}: f\in \Clo (a,b), \| f\|_{L^2(a,b)}=1 \}$.

\begin{lemma}\label{compactoperator}
The operator $T$ is compact.
\end{lemma}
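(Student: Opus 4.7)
The plan is to show that $T$ is the operator-norm limit of a sequence of finite-rank operators, hence compact. Define
\[
T_N(f)(z) := \sum_{n=1}^{N} \langle f, r_n\rangle \cdot u_n \cdot e^{\lambda_n z}.
\]
Each $T_N$ has image contained in $\operatorname{span}\{e_1, \dots, e_N\}$, so $T_N$ has rank at most $N$ and is therefore compact. It then suffices to show that $\|T - T_N\| \to 0$ as $N \to \infty$.

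To estimate $\|T - T_N\|$, I would apply the triangle inequality to the tail of the defining series. For any $f \in \Clo(a,b)$,
\[
\|T(f) - T_N(f)\|_{L^2(a,b)} \le \sum_{n=N+1}^{\infty} |\langle f, r_n\rangle \cdot u_n| \cdot \|e_n\|_{L^2(a,b)}.
\]
The bound $(\ref{upperbound})$ supplies $|\langle f, r_n\rangle \cdot u_n| \le \|f\|_{L^2(a,b)} \cdot m_\epsilon e^{(-b-\delta+\epsilon)\lambda_n}$ for any $0 < \epsilon < \delta$, while a direct integration gives
\[
\|e_n\|_{L^2(a,b)} = \left(\frac{e^{2\lambda_n b} - e^{2\lambda_n a}}{2\lambda_n}\right)^{1/2} \le \frac{e^{\lambda_n b}}{\sqrt{2\lambda_n}}.
\]
Multiplying these estimates, the troublesome factor $e^{\lambda_n b}$ cancels and one obtains
\[
\|T(f) - T_N(f)\|_{L^2(a,b)} \le \|f\|_{L^2(a,b)} \cdot m_\epsilon \sum_{n=N+1}^{\infty} \frac{e^{(\epsilon-\delta)\lambda_n}}{\sqrt{2\lambda_n}}.
\]

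Fixing $\epsilon < \delta$, the separation condition $\lambda_{n+1} - \lambda_n \ge c$ forces $\lambda_n \ge \lambda_1 + (n-1)c$, so the exponent $(\epsilon-\delta)\lambda_n$ tends to $-\infty$ at least linearly in $n$; the series $\sum e^{(\epsilon-\delta)\lambda_n}/\sqrt{2\lambda_n}$ therefore converges, and its tail vanishes as $N \to \infty$. This yields $\|T - T_N\| \to 0$, so $T$ is a norm limit of finite-rank (hence compact) operators and is therefore itself compact.

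I do not foresee a serious obstacle; the only delicate point is verifying that the upper bound $(\ref{upperbound})$ combines correctly with the exponential size $\|e_n\|_{L^2(a,b)} \sim e^{\lambda_n b}/\sqrt{2\lambda_n}$ so that the offending factor $e^{\lambda_n b}$ is absorbed, leaving a net decay of order $e^{-(\delta-\epsilon)\lambda_n}$. Once $\epsilon$ is chosen strictly less than $\delta$, the lacunarity furnished by $(\ref{LKcondition})$ takes care of summability.
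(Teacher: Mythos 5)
Your proof is correct and follows essentially the same route as the paper: truncate to the finite-rank operators $T_N$, bound the tail via the triangle inequality and $(\ref{upperbound})$ together with $\|e_n\|_{L^2(a,b)}\lesssim e^{\lambda_n b}$, and conclude summability of $\sum e^{(\epsilon-\delta)\lambda_n}$ from the separation condition. The only (harmless) difference is that you compute $\|e_n\|_{L^2(a,b)}$ exactly, gaining an extra factor $1/\sqrt{2\lambda_n}$ that the paper does not need.
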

\begin{proof}

Let $e_n(x)=e^{\lambda_n x}.$
Define  $T_m$ on $\Clo (a,b)$ by
\[
T_m(g)(x)=\sum_{n=1}^m \langle g, r_n \rangle u_n e_n(x).
\]
Let $f$ be a unit vector in $\Clo (a,b)$. It is easy to see that
\[
\|(T-T_m)(f)\|_{L^2(a,b)}\leq \sum_{n=m+1}^\infty \| \langle f, r_n \rangle u_n e_n  \|_{L^2(a,b)}.
\]

We have that $\| \langle f, r_n \rangle u_n e_n  \|_{L^2(a,b)}\leq m_\epsilon e^{(-b-\delta+\epsilon)\lambda_n} \| e_n \|_{L^2(a,b)}.$
It is easy to see that $\| e_n \|_{L^2(a,b)}<e^{\lambda_nb}.$
Thus,  $\| \langle f, r_n \rangle u_n e_n  \|_{L^2(a,b)}\leq m_\epsilon e^{(-\delta+\epsilon)\lambda_n}.$ Therefore,
\[
\|(T-T_m)(f)\|_{L^2(a,b)}\leq m_\epsilon  \sum_{n=m+1}^\infty e^{(-\delta+\epsilon)\lambda_n}.
\]
Since $\delta > \epsilon,$ the series above converges. Hence, $\|T-T_m\|$ tends to zero as $m$ tends to infinity.
Thus, the finite rank operators $\{ T_m\}$ converge to $T$ in the uniform operator topology. Therefore,
$T$ is compact.
\end{proof}

\begin{lemma}\label{Eigenvalues}
The following are true about the operator $T$ and its adjoint.
\begin{enumerate}
\item $\{ u_k\}_{k=1}^{\infty}$ are eigenvalues  of $T$ and $\{ e_k\}_{k=1}^{\infty}$ are the corresponding eigenvectors.
  \item $\{\overline{u_k}\}$ are eigenvalues  of $T^*$ and $\{ r_k\}_{k=1}^{\infty}$ are the corresponding eigenvectors.
  \item The kernel of $T$ is trivial.
  \item The spectrum of $T$ is $\displaystyle{\{ 0\} \cup \{ u_k\}_{k=1}^{\infty}}$.
  \item Each eigenvalue of $T$ is simple.

\end{enumerate}
\end{lemma}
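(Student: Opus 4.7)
The plan is to work through the five items sequentially, leaning on two earlier facts: the biorthogonality $\langle e_n, r_m\rangle = \delta_{nm}$ from Lemma~\ref{hereditarylemma}(I), and the uniqueness of the Fourier--Dirichlet representation from Lemma~\ref{hereditarylemma}(II), i.e.\ a function $h\in\Clo(a,b)$ whose coefficients $\langle h, r_n\rangle$ all vanish must itself be zero.

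For item (1), I would substitute $f=e_k$ into $(\ref{operator})$; by biorthogonality only the $n=k$ term survives, giving $T(e_k)=u_k e_k$. Since $e_k\ne 0$ and $u_k\ne 0$, this is a genuine eigenpair. For item (2), I would compute $\langle T(f), r_k\rangle$ for arbitrary $f\in\Clo(a,b)$. Since the series defining $T(f)$ converges in the $L^2(a,b)$ norm (as recorded just after $(\ref{operator})$), the inner product can be exchanged with the summation, yielding
\[
\langle T(f), r_k\rangle \;=\; \sum_{n=1}^\infty \langle f, r_n\rangle\, u_n\, \langle e_n, r_k\rangle \;=\; u_k \langle f, r_k\rangle \;=\; \langle f, \overline{u_k}\, r_k\rangle.
\]
As $r_k\in\Clo(a,b)$ lies in the domain of $T^*$, this identifies $T^*(r_k)=\overline{u_k}\, r_k$, and since $r_k\ne 0$ these are eigenpairs for $T^*$.

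For item (3), if $T(f)=0$ then, by the uniqueness in Lemma~\ref{hereditarylemma}(II) applied to the function $T(f)$, all of its Fourier--Dirichlet coefficients vanish; but these coefficients are $u_n \langle f, r_n\rangle$, and since each $u_n\ne 0$ we conclude $\langle f, r_n\rangle =0$ for every $n$, hence $f=0$ by another application of Lemma~\ref{hereditarylemma}(II) to $f$ itself. For items (4) and (5), I would combine compactness (Lemma~\ref{compactoperator}) with the Riesz--Schauder theorem: the non-zero spectrum of $T$ consists of eigenvalues, so it suffices to classify the eigenpairs. Suppose $T(f)=\mu f$ with $\mu\ne 0$ and $f\ne 0$; writing $f=\sum_n \langle f, r_n\rangle e^{\lambda_n z}$ and $T(f)=\sum_n u_n \langle f, r_n\rangle e^{\lambda_n z}$, uniqueness of the representation forces $(u_n-\mu)\langle f, r_n\rangle =0$ for every $n$. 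Because the $u_n$ are distinct, at most one index $k$ satisfies $u_k=\mu$, so $\langle f, r_n\rangle=0$ for $n\ne k$ and $f$ is a scalar multiple of $e_k$. This simultaneously shows $\mu=u_k$ for some $k$ (giving the spectrum $\{0\}\cup\{u_k\}$, where $0$ must lie in the spectrum since $T$ is compact and $\Clo(a,b)$ is infinite dimensional) and that every eigenspace is one dimensional, i.e.\ each eigenvalue is simple.

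The only step with any subtlety is the computation of $T^*(r_k)$: it requires justifying the interchange of inner product and infinite sum, which is why the $L^2$-convergence of the defining series $(\ref{operator})$ was emphasized in the remark preceding this lemma. The remaining arguments are direct consequences of biorthogonality and uniqueness of Dirichlet coefficients.
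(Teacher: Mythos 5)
Your proposal is correct and follows essentially the same route as the paper: both hinge on the biorthogonality $\langle e_n,r_m\rangle=\delta_{nm}$ to get $T(e_k)=u_ke_k$ and $T^*(r_k)=\overline{u_k}\,r_k$, and then derive $(u_n-\mu)\langle f,r_n\rangle=0$ to handle the kernel, the spectrum, and simplicity. The only (harmless) cosmetic differences are that you invoke uniqueness of the Dirichlet coefficients from Lemma~\ref{hereditarylemma}(II) where the paper instead passes through the adjoint identity and completeness of $\{r_n\}$, and that you justify the interchange of sum and inner product via the $L^2$-convergence noted after $(\ref{operator})$ rather than testing $T^*r_k-\overline{u_k}r_k$ against the complete system $\{e_n\}$.
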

\begin{proof}

$\quad$

1. Due to the biorthogonal relation between $E_{\Lambda}$ and $r_{\Lambda}$, we see in $(\ref{operator})$ that
\[
T(e_k)=u_k e_k.
\]

2. For fixed $k\in\mathbb{N}$ and any $n\in\mathbb{N}$ we have
\begin{eqnarray}
\langle T^*r_k-\overline{u_k} r_k, e_n\rangle & = & -\overline{u_k}\langle r_k, e_n\rangle +\langle r_k, Te_n\rangle\nonumber\\
& = & -\overline{u_k}\langle r_k, e_n\rangle + \langle r_k, u_n e_n\rangle\nonumber\\
& = & -\overline{u_k}\langle r_k, e_n\rangle + \overline{u_n}\langle r_k, e_n\rangle. \label{zero}
\end{eqnarray}
If $n=k$ then $(\ref{zero})$ equals zero, and the same holds for all $n\not= k$
due to the biorthogonal relation between $r_\Lambda$ and $E_\Lambda$ .
Thus $\langle T^*r_k-\overline{u_k} r_k, e_n\rangle=0$ for all $n\in\mathbb{N}$, hence
\[
T^*r_k-\overline{u_k}\cdot r_k=\mathbf{0}.
\]

3. We now show that the kernel of $T$ is the zero set.
For any $n\in\mathbb{N}$ we have
\[
\langle Tf, r_n\rangle =\langle f, T^*r_n\rangle = \langle f, \overline{u_n} r_n\rangle.
\]
If $Tf=0$ then one has $\langle f, r_n\rangle =0$ for all $n\in\mathbb{N}$. Since the family $\{r_n\}_{n=1}^{\infty}$
is complete in the space $\Clo (a,b)$, then $f=\mathbf{0}$. Hence the kernel of $T$ is the zero function.\\

\smallskip

4. Next we prove that $\{u_k\}$ are the only non-zero eigenvalues.
Suppose that
$Tf=\lambda f$ for some $\lambda\notin\{u_n\}_{n=1}^{\infty}$ and $f\not=\mathbf{0}$. Then,
\begin{align}
\lambda\langle f, r_n\rangle& =  \langle Tf, r_n\rangle, \notag \\
& =\langle f, T^*r_n\rangle \notag, \\
&= \langle f, \overline{u_n} r_n\rangle, \notag \\
&=u_n\langle f, r_n\rangle.\label{Only_eigenvalues}
\end{align}
Thus,
\[
(\lambda-u_n)\cdot \langle f, r_n\rangle=0
\]
for all $n.$ But $\lambda\notin\{u_n\}_{n=1}^{\infty}$ therefore $\langle f, r_n\rangle=0$ for all $n\in\mathbb{N}$.
Since the family $\{r_n\}_{n=1}^{\infty}$ is complete in the space $\Clo (a,b)$, then $f=\mathbf{0}$.
This contradiction shows that $\{ u_k\}$ are the only non-zero eigenvalues.
Since $T$ is compact it follows that the spectrum of $T$ is
\[
\displaystyle{\{ 0\} \cup \{ u_k\}_{k=1}^{\infty}}.
\]

5. Next we prove that each eigenvalue of $T$ is simple.
Suppose that $Tf=u_k f$ for some $u_k\in\{u_n\}_{n=1}^{\infty}$ and some $f\in \Clo (a,b)$.
Then, using the same computation in (\ref{Only_eigenvalues}) above, we get

\[
(u_k-u_n)\langle f , r_n\rangle = 0 \qquad 
\]
for all $n.$ But if $n\not= k,$ then $u_n\not= u_k$, thus, $\langle f , r_n\rangle = 0$ for all $n\not= k$. Since
\[
f(t)=\sum_{n=1}^{\infty} \langle f , r_n\rangle\cdot  e_n(t),
\]
then we get $f(t)=\langle f , r_k\rangle e_k$. But this means that $u_k$ is simple.

\end{proof}

\begin{lemma}\label{notnormal}
The operator $T$ is not normal.
\end{lemma}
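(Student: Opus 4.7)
The plan is to exploit the standard fact that, for a bounded normal operator on a Hilbert space, eigenvectors corresponding to distinct eigenvalues must be orthogonal; I will then contradict this by exhibiting non-orthogonal eigenvectors of $T$.

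First, from Lemma \ref{Eigenvalues}(1) we have $T e_n = u_n e_n$ for every $n\in\mathbb{N}$, and by the very choice of the sequence in display (\ref{un}) the scalars $\{u_n\}$ are pairwise distinct. So each pair $e_m, e_n$ with $m\neq n$ furnishes two eigenvectors of $T$ attached to two different eigenvalues.

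Second, I would simply compute the inner product of the exponential eigenvectors on $(a,b)$:
\[
\langle e_m, e_n\rangle_{L^2(a,b)} = \int_a^b e^{(\lambda_m+\lambda_n)x}\, dx = \frac{e^{(\lambda_m+\lambda_n)b}-e^{(\lambda_m+\lambda_n)a}}{\lambda_m+\lambda_n}.
\]
Since $\lambda_m+\lambda_n>0$ and $b>a$, this quantity is strictly positive; in particular $e_m$ and $e_n$ are not orthogonal in $L^2(a,b)$ for any $m,n$.

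Third, I would invoke the well-known consequence of normality: if $T$ were normal, any two eigenvectors corresponding to distinct eigenvalues would satisfy $\langle e_m, e_n\rangle=0$. The previous step contradicts this for any $m\neq n$, so $T$ cannot be normal. There is really no main obstacle here; the only thing to be careful about is to note explicitly that distinctness of the $u_n$ is guaranteed by the choice (\ref{un}), so that the orthogonality conclusion of normality would actually apply to the pairs $e_m,e_n$ that we are comparing.
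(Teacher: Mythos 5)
Your proposal is correct and follows essentially the same route as the paper: both arguments rest on the fact that eigenvectors of a normal operator belonging to distinct eigenvalues are orthogonal, which fails here since $\langle e_m,e_n\rangle=\int_a^b e^{(\lambda_m+\lambda_n)x}\,dx>0$. Your version merely makes explicit the positivity computation and the observation that the $u_n$ are pairwise distinct (which the paper builds into the choice of the sequence $\{u_n\}$, not into the bound (\ref{un}) itself), both of which the paper leaves implicit.
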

\begin{proof}
Suppose that $T$ is a normal operator, thus
\[
TT^*(e_k)=T^*T(e_k)\qquad \text{for\,\,all}\quad k\in\mathbb{N}.
\]
But any two eigenvectors that correspond to different eigenvalues of a normal operator are orthogonal.
Clearly this is not the case here for the set of eigenvectors $e_n=e^{\lambda_n t}$ of $T$.
\end{proof}

And finally, biorthogonality of $\{ e_n\}$ and $\{r_n \}$ allows us to compute $T^*$ explicitly.

\begin{lemma}\label{TheAdjoint}
The adjoint operator $T^*$ of $T$ admits the representation
\begin{equation}\label{adjoint}
T^*f(z)=\sum_{n=1}^{\infty} \langle f , e_n\rangle\cdot \overline{u_n}\cdot  r_n(z),
\end{equation}
with the series converging uniformly on compact susbets of the half-plane $\Re z<b$. The series also converges in the
$L^2 (a,b)$ norm.
\end{lemma}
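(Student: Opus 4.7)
The plan is to mirror the construction of $T$ in (\ref{operator}): I will show that the series on the right of (\ref{adjoint}) converges both uniformly on compact subsets of $\Re z<b$ and in $L^2(a,b)$ norm, and then identify its sum with $T^*f$ by pairing against the complete system $\{e_k\}$ in $\Clo (a,b)$. The structural point is that $\{e_n\}$ and $\{r_n\}$ play dual roles in (\ref{operator}) and (\ref{adjoint}), so I expect estimates parallel to those of Lemma \ref{compactoperator} to suffice, with $|u_n|\le e^{-\delta\lambda_n}$ again driving the convergence.

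The first and main step is to upgrade the $L^2$ bound (\ref{rnkbound}) on $r_n$ to a pointwise bound on its analytic extension, with the same exponential decay in $n$. Since $r_n\in\Clo (a,b)$, Lemma \ref{hereditarylemma}(II) applied to $r_n$ itself gives the Dirichlet expansion $r_n(z)=\sum_{m=1}^\infty \langle r_n,r_m\rangle e^{\lambda_m z}$ on $\Re z<b$, with the Cauchy--Schwarz bound $|\langle r_n,r_m\rangle|\le \|r_n\|_{L^2(a,b)}\|r_m\|_{L^2(a,b)}$. For any $\sigma_0<b$ and small $\epsilon>0$, (\ref{rnkbound}) then yields
\[
|r_n(z)|\le m_\epsilon e^{(-b+\epsilon)\lambda_n}\sum_{m=1}^\infty \|r_m\|_{L^2(a,b)}\, e^{\lambda_m\sigma_0}\le C_{\sigma_0,\epsilon}\, e^{(-b+\epsilon)\lambda_n}
\]
uniformly for $\Re z\le\sigma_0$, the $n$-independent series converging thanks to the gap condition in (\ref{LKcondition}).

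Combining this with the elementary estimate $|\langle f,e_n\rangle|\le \|f\|_{L^2(a,b)}\,e^{\lambda_n b}$ and the bound (\ref{un}), the general term of (\ref{adjoint}) is dominated on $\Re z\le\sigma_0$ by a constant multiple of $\|f\|_{L^2(a,b)}\,e^{(-\delta+\epsilon)\lambda_n}$. Choosing $\epsilon<\delta$ makes this tail summable (again by the gap condition), yielding uniform convergence on compact subsets of $\Re z<b$. The parallel computation with $\|r_n\|_{L^2(a,b)}$ in place of the pointwise bound gives absolute convergence in $L^2(a,b)$.

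To close, let $S$ denote the $L^2$-sum of the series. Each partial sum lies in $\Clo (a,b)$, so $S\in\Clo (a,b)$; and $T^*f\in\Clo (a,b)$ since $T$ leaves this space invariant. Using continuity of the inner product, biorthogonality of $\{e_n,r_n\}$, and the eigenrelation $Te_k=u_k e_k$ from Lemma \ref{Eigenvalues},
\[
\langle S, e_k\rangle=\overline{u_k}\,\langle f,e_k\rangle=\langle f, Te_k\rangle=\langle T^*f, e_k\rangle
\]
for every $k$. Since $\{e_k\}$ is by definition complete in $\Clo (a,b)$, this forces $S=T^*f$. The only real obstacle is the pointwise estimate on $r_n$ in the second paragraph; the crucial observation is that Lemma \ref{hereditarylemma}(II) identifies its Dirichlet coefficients as $\langle r_n,r_m\rangle$, after which Cauchy--Schwarz and (\ref{rnkbound}) finish the job.
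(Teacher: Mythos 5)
Your proposal is correct. The first two steps --- the pointwise bound $|r_n(z)|\le C\,e^{(-b+\epsilon)\lambda_n}$ on $\Re z\le\sigma_0$ obtained from the Dirichlet expansion $r_n(z)=\sum_m\langle r_n,r_m\rangle e^{\lambda_m z}$ of Lemma \ref{hereditarylemma}(II) together with Cauchy--Schwarz and (\ref{rnkbound}), and the resulting uniform and $L^2$ convergence of the series --- are essentially identical to parts (I) and (II) of the paper's proof (the paper packages the $L^2$ convergence through a dominating function $H$ and the Lebesgue convergence theorem, whereas you use norm-absolute convergence directly; both are fine). Where you genuinely diverge is the identification step. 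The paper proves $\langle Th,f\rangle=\langle h,g_f\rangle$ for \emph{every} $h\in\Clo(a,b)$ by expanding both sides and justifying two interchanges of sum and integral, then invokes uniqueness of the adjoint. You instead note that both the $L^2$-sum $S$ and $T^*f$ lie in $\Clo(a,b)$ (the former because $\Clo(a,b)$ is closed, the latter because $T^*$ is by definition an operator on $\Clo(a,b)$), compute $\langle S,e_k\rangle=\overline{u_k}\langle f,e_k\rangle=\langle f,Te_k\rangle=\langle T^*f,e_k\rangle$ from biorthogonality and the eigenrelation $Te_k=u_ke_k$ of Lemma \ref{Eigenvalues}, and conclude $S=T^*f$ from completeness of $\{e_k\}$ in $\Clo(a,b)$. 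This is a cleaner route: it tests against a single complete family rather than against all of $\Clo(a,b)$, and it replaces the dominated-convergence bookkeeping by continuity of the inner product under $L^2$ convergence, which you have already established. The one point worth stating explicitly is that you need $\epsilon<\min(\delta,\,b-\sigma_0)$ so that both the $m$-series defining the pointwise bound and the final $n$-series converge; your phrasing ``small $\epsilon$'' covers this but it deserves a line.
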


\begin{proof}
Given $f\in \Clo (a,b)$,

(I) We first prove that the series
\begin{equation}\label{seriesgf}
g_f(z):=\sum_{n=1}^{\infty} \langle f , e_n\rangle\cdot \overline{u_n}\cdot  r_n(z)
\end{equation}
converges uniformly on compact susbets of the half-plane $\Re z<b$.

$(II)$ We then show that $g_f$ belongs to the space $\Clo (a,b)$ and  $g_f$ converges in the
$L^2 (a,b)$ norm.

$(III)$ And finally we prove that
\[
\langle Th, f\rangle = \langle h, g_f\rangle\qquad \text{for\,\, all}\quad h\in \Clo (a,b).
\]

\smallskip

$(I)$ Consider first an arbitrary compact subset $K$ of the half plane $\Re z<b$,
such that $\Re z\le A$ for all $z\in K$ for some $A<b$.  From the Fourier-type Dirichlet series representation
$(\ref{representationf})$, we have
\[
r_n(z)=\sum_{m=1}^{\infty} \langle r_n, r_m \rangle e^{\lambda_m z}
\]
with the series converging uniformly on $K$. Also, from $(\ref{rnkbound})$,
for every $\epsilon>0$ there exists a positive constant $M_{\epsilon}$ so that
\[
|\langle r_n, r_m \rangle|\le M_{\epsilon}\cdot e^{(-b+\epsilon)\lambda_n}\cdot e^{(-b+\epsilon)\lambda_m}.
\]
Take $0<\epsilon<b-A$ thus $-b+\epsilon+A<0$. Then
\[
|r_n(z)|\le M_{\epsilon}\cdot e^{(-b+\epsilon)\lambda_n}\cdot \sum_{m=1}^{\infty}e^{(-b+\epsilon)\lambda_m}\cdot e^{A\lambda_m}
\quad\text{for\,\, all}\quad z\in K.
\]
Since $-b+\epsilon+A<0$, this means that there is some positive constant $P_K$ which depends on the set $K$,
and not on $n\in\mathbb{N}$, so that
\[
|r_n(z)|\le P_K\cdot M_{\epsilon}\cdot e^{(-b+\epsilon)\lambda_n}\quad \text{for\,\, all}\quad z\in K.
\]
Now, from $(\ref{un})$, the Cauchy-Schwarz inequality and since $e_n(t)=e^{\lambda_n t}$, one gets
\[
|\langle f, e_n \rangle \cdot \overline{u_n}|\le ||f||_{L^2 (a,b)}\cdot e^{\lambda_n b}\cdot e^{-\delta\lambda_n}.
\]
Combining the above bounds shows that
\[
|\langle f, e_n \rangle \cdot \overline{u_n}\cdot r_n(z)|\le P_K\cdot M_{\epsilon}\cdot ||f||_{L^2 (a,b)}\cdot e^{(-\delta+\epsilon)\lambda_n}
\quad \text{for\,\, all}\quad z\in K.
\]
Since $\epsilon<\delta$, the above shows that the series $g_f$ $(\ref{seriesgf})$ converges uniformly on $K$. \\

\smallskip

$(II)$ Next, for every $n\in\mathbb{N}$ let
\[
V_n(z):= \langle f, e_n \rangle \cdot \overline{u_n}\cdot r_{n}(z),
\]
thus
\[
g_f(z)=\sum_{n=1}^{\infty} V_n(z).
\]
It then follows from relations $(\ref{rnkbound})$ and $(\ref{un})$, that for every $\epsilon\in (0, \delta)$,
there is a positive constant $M_{\epsilon}$, so that
\begin{eqnarray*}
||V_n(t)||_{L^2 (a,b)} & = &
|\langle f, e_n \rangle| \cdot |u_n|\cdot ||r_{n}||_{L^2 (a,b)}\\
& \le & M_{\epsilon}\cdot ||f||_{L^2 (a,b)}\cdot e^{\lambda_n b}\cdot e^{-\delta\lambda_n}\cdot e^{(-b+\epsilon)\lambda_n}.
\end{eqnarray*}
Thus
\[
||V_n(t)||_{L^2 (a,b)}\le  M_{\epsilon}\cdot ||f||_{L^2 (a,b)}\cdot e^{(-\delta+\epsilon)\lambda_n}.
\]
Since $\epsilon<\delta$ then
\begin{equation}\label{Vnt}
\sum_{n=1}^{\infty}||V_n(t)||_{L^2 (a,b)}<\infty.
\end{equation}

Consider now the function
\begin{equation}\label{H}
H(t):=\sum_{n=1}^{\infty}|V_n(t)|=\sum_{n=1}^{\infty} |\langle f, e_n \rangle \cdot \overline{u_n}|
\cdot |r_{n}(t)|\qquad \forall\,\, t\in (a,b).
\end{equation}
From $(\ref{Vnt})$ and the Minkowski inequality, we see that $H$ belongs to the space $L^2 (a, b)$.
Clearly $|g_f(t)|\le H(t)$ for all $t\in (a,b)$, thus $g_f$ belongs to $L^2 (a, b)$ as well.

We also claim that $g_f$ converges in the $L^2 (a, b)$ norm. Indeed, we have
\[
\left|\left(g_f(t)-\sum_{n=1}^{M} V_n(t)\right)\right|^2=\left|\left(\sum_{n=M+1}^{\infty} V_n(t)\right)\right|^2 \le
\left(\sum_{n=M+1}^{\infty} |V_n(t)|\right)^2\le H^2(t).
\]
Moreover, $|g_f(t)-\sum_{n=1}^{M} V_n(t)|^2\to 0$ as $M\to\infty$
for all $t\in (a,b)$.
Then by the Lebesgue Convergence Theorem we have
\begin{equation}\label{tozeroU}
||g_f-\sum_{n=1}^{M}V_n||_{L^2 (a,b)}\to 0\qquad\text{as}\qquad M\to\infty.
\end{equation}

Finally, it is clear that each $V_n$ belongs to the space $\Clo (a,b)$.
Together with  $(\ref{tozeroU})$ yields that $g_f$  also belongs to the space $\Clo (a,b)$.\\

\smallskip

$(III)$ Next, let $h\in \Clo (a,b)$. It follows from $(\ref{rnkbound})$ and $(\ref{un})$ that
\[
|\overline{f(t)}\cdot\langle h, r_n\rangle\cdot u_n\cdot  e_n(t)|\le m_{\epsilon}e^{(-\delta+\epsilon)\lambda_n}
\cdot ||h||_{L^2 (a,b)}\cdot |f(t)|\qquad \text{for\,\, all}\quad t\in (a,b).
\]
This justifies below interchanging the integral sign with the summation sign
\begin{eqnarray}
\langle Th, f\rangle & = &
\int_{a}^{b} \overline{f(t)}\cdot \left(\sum_{n=1}^{\infty} \langle h, r_n\rangle\cdot u_n\cdot  e_n(t)\right)\, dt \nonumber\\
& = &
\sum_{n=1}^{\infty} \langle h, r_n\rangle\cdot u_n\cdot\int_{a}^{b} \overline{f(t)}\cdot e_n(t)\, dt \nonumber\\
& = & \sum_{n=1}^{\infty} \langle h, r_n\rangle\cdot u_n\cdot\langle e_n, f\rangle. \label{check}
\end{eqnarray}

Consider then the function $H$ as in $(\ref{H})$ with $H\in L^2 (a, b)$ as discussed before.
For any $M\in\mathbb{N}$, one has
\[
|\sum_{n=1}^{M} \langle f, e_n \rangle \cdot \overline{u_n}\cdot r_{n}(t)|\le |H(t)|\qquad \text{for\,\, all}\quad t\in (a,b).
\]
From this upper bound and the Lebesgue Convergence Theorem, we can interchange below the integral sign with the summation sign
to get
\begin{eqnarray}
\langle h, g_f\rangle & = & \int_{a}^{b} h(t)\cdot
\overline{\left(\sum_{n=1}^{\infty} \langle f, e_n \rangle \cdot \overline{u_n}\cdot r_{n}(t)\right)}\, dt\nonumber\\
& = &
\sum_{n=1}^{\infty} \langle e_n, f \rangle \cdot u_n\cdot\int_{a}^{b} h(t) \overline{r_{n}(t)}\, dt\nonumber\\
& = & \sum_{n=1}^{\infty} \langle e_n, f \rangle \cdot u_n\cdot\langle h, r_n\rangle. \label{check2}
\end{eqnarray}
But $(\ref{check2})$ is identical to $(\ref{check})$, thus we conclude that
$\langle Th, f\rangle = \langle h, g_f\rangle$. Clearly this holds for all $h\in \Clo (a,b)$ and therefore
\[
\langle h, T^*f\rangle = \langle h, g_f\rangle\qquad \text{for\,\, all}\quad h\in \Clo (a,b).
\]
By the uniqueness of the adjoint operator $T^*$, the formula $(\ref{adjoint})$ for $T^*$ is valid.
\end{proof}

\end{document}